\patchcmd{\thebibliography}{\section*{\refname}}{}{}{}
\tikzset{commutative diagrams/.cd,arrow style=tikz,diagrams={>=latex'}}
\newcommand*{\rom}[1]{\expandafter\@slowromancap\romannumeral #1@}
\theoremstyle{plain}
\newtheorem{Th}{Theorem}[section]
\newtheorem{Claim}[Th]{Claim}
\theoremstyle{definition}
\newtheorem{Conj}[Th]{Conjecture}
\theoremstyle{plain}
\theoremstyle{remark}
\numberwithin{equation}{section}
\def\ex{\textrm{ex}}
\begin{document}

\title{Constructions of point-line arrangements in the plane with
large girth}

\author{Mozhgan Mirzaei\thanks{Department of Mathematics,  University of California at San Diego, La Jolla, CA, 92093 USA.  Supported by NSF grant DMS-1800736. Email:
{\tt momirzae@ucsd.edu}.}\and Andrew Suk\thanks{Department of Mathematics,  University of California at San Diego, La Jolla, CA, 92093 USA. Supported by an NSF CAREER award and an Alfred Sloan Fellowship. Email: {\tt asuk@ucsd.edu}.}\and Jacques Verstra\"ete\thanks{Department of Mathematics,  University of California at San Diego, La Jolla, CA, 92093 USA. Supported by NSF grant DMS-1800832. Email: {\tt jverstraete@ucsd.edu}.} }

\maketitle
\begin{abstract}
 A classical result by Erd\H{o}s, and later on by Bondy and Simonivits, states that every $n$-vertex graph with no cycle of length $2k$ has at most $O(n^{1+1 /k})$ edges. This bound is known to be tight when $k \in \{2,3,5\},$ but it is a major open problem in extremal graph theory to decide if this bound is tight for all $k$.

In this paper, we study the effect of forbidding short even cycles in incidence graphs of point-line arrangements in the plane.  It is not known if the Erd\H{o}s upper bound stated above can be improved to $o(n^{1+1/k})$ in this geometric setting, and in this note, we establish non-trivial lower bounds for this problem by modifying known constructions arising in finite geometries.  In particular, by modifying a construction due to Labeznik and Ustimenko, we construct an arrangement of $n$ points and $n$ lines in the plane, such that their incidence graph has girth at least $k + 5$, and determines at least $\Omega({n^{1+\frac{4}{k^2+6k-3}}})$ incidences.  We also apply the same technique to Wenger graphs, which gives a better lower bound for $k=5.$
\end{abstract}

\section{Introduction}
Let $k \geq 2$ be a fixed integer and let $C_{2k}$ denote a cycle of length $2k.$  If a graph contains cycles, then the length of the shortest cycle is called the \emph{girth} of the graph and is denoted by $g.$ A graph is $C_{2k}$-free if it contains no subgraph isomorphic to $C_{2k}.$ The \emph{Tur\'an number} $\ex\left(n, C_{2k}\right)$ denotes the maximum number of edges in a $C_{2k}$-free graph on $n$ vertices. An unpublished result of Erd\H{o}s, which was also proved by Bondy and Simonovits in \cite{bondy1974cycles}, shows that
\[
\ex\left(n, C_{2k}\right)=O\left(n^{1+1 /k}\right).
\]
In the other direction, one can use the probabilistic method to show that
$$\ex(n,C_{2k}) \geq \ex\left(n,\left\{C_{3}, C_{4}, \ldots, C_{2 k+1}\right\}\right) \geq  \Omega(n^{1+1/2k}).$$

\noindent For $k \in \{2,3,5\}$, it is known that $\ex\left(n, C_{2k}\right)=\Theta\left(n^{1+{1}/{k}}\right)$(see \cite{brown1966graphs,erd1966r} for $k=2$ and \cite{benson1966minimal, lazebnik1993new, wenger1991extremal} for $k\in \{3,5\}$). It is a long standing open problem to determine the order of magnitude of $\ex\left(n, {C}_{2k}\right)$ for $k \notin\{2,3,5\}.$  Erd\H{o}s and Simonovits \cite{erdHos1982compactness} conjectured that ${\ex}\left(n,{C}_{2k}\right)=\Theta\left(n^{1+1 /k}\right)$ for all $k \geq 2.$
For $k=4,$ the current best lower bound is due to Benson \cite{benson1966minimal} and Singleton \cite{singleton1966minimal}, who showed ${\ex}\left(n, {C}_{8} \right) \geq \Omega(n^{6 / 5})$.
For large $k,$ the densest known ${C}_{2k}$-free graphs on $n$ vertices are the constructions of Ramanujan graphs due to Margulis \cite{margulis1982explicit} Lubotzky, Phillips, and Sarnak \cite{lubotzky1988ramanujan},  Lazebnik, Ustimenko and Woldar \cite{lazebnik1995new}, and Dahan and Tillich \cite{dahan2014regular}. The constructions provide $n^{1+\delta}$ edges where $\delta\sim \frac{6}{7k}$ as $k \rightarrow \infty.$

\bigskip
In this paper, we study the analogous  problem for the point-line incidences in the plane.
Given a finite set $P$ of points in the plane and a finite set $\mathcal{L}$ of lines in the plane, let $I(P,\mathcal{L}) =\{(p, \ell) \in P\times \mathcal{L}:   p \in \ell \}$ be the set of incidences between $P$ and $\mathcal{L}$.  The \emph{incidence graph} of $(P,\mathcal{L})$ is the bipartite graph $G = (P\cup \mathcal{L}, I)$, with vertex parts $P$ and $\mathcal{L},$ and $E(G)=I(P, \mathcal{L})$. If $|P| = m$ and $|\mathcal{L}| = n$, then the celebrated theorem of Szemer\'edi and Trotter \cite{szemeredi1983extremal} states that
\begin{equation}\label{szt}
|I(P,\mathcal{L})|=O \left(m^{2/3}n^{2/3} + m + n\right).
\end{equation}
Moreover, this bound is tight which can be seen by taking the $\sqrt{m} \times \sqrt{m}$ integer lattice and bundles of parallel "rich" lines (see \cite{pach2011combinatorial}).  Thus, the number of incidences between any arrangement of $n$ points and $n$ lines is at most $O(n^{4/3})$, which is significantly better than $ex(n,C_4) = \Theta(n^{3/2})$.  Therefore, we make the following conjecture.

\begin{Conj}\label{conj1}
Let $k \geq 3$ be an integer.  Let $P$ be a set of $n$ points in the plane, let $\mathcal{L}$ be a set of $n$ lines in the plane, and $I =I(P,\mathcal{L})$.  If the incidence graph $G = (P\cup \mathcal{L},I)$ is $C_{2k}$-free, then $|I(P,L)|= o(n^{1+1/k}).$
\end{Conj}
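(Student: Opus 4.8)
\medskip

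The plan is to treat Conjecture~\ref{conj1} not as a purely combinatorial statement but as a request for a genuinely geometric strengthening of the Bondy--Simonovits bound, and to isolate exactly where the extra geometric information can be injected beyond what is automatic. First I would record the baseline. Since two distinct points lie on at most one common line and two distinct lines meet in at most one point, the incidence graph $G$ of \emph{any} point--line arrangement is automatically $K_{2,2}$-free, hence $C_4$-free; this is precisely why \eqref{szt} beats the generic $\ex(n,C_4)=\Theta(n^{3/2})$ bound. Consequently the case $k=2$ needs no cycle hypothesis at all, and the real content of the conjecture is that forbidding a \emph{single} longer even cycle forces the incidence count strictly below $n^{1+1/k}$. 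I would then separate two regimes: for $k\geq 4$ one has $n^{1+1/k}\leq n^{5/4}$, already polynomially below the Szemer\'edi--Trotter ceiling $n^{4/3}$, so here the girth hypothesis must do substantial work; for $k=3$ the target $o(n^{4/3})$ is a borderline improvement of \eqref{szt} itself, and this is the natural entry point.

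For the method, I would try to transplant the counting heart of the Bondy--Simonovits argument --- that a bipartite graph with $\gg n^{1+1/k}$ edges contains a theta graph and hence a $C_{2k}$ --- but feed in Szemer\'edi--Trotter at dyadic scales in place of crude degree bounds. Concretely: partition the points by their degree in $G$ into classes of roughly equal richness, do the same for lines, bound the incidences between each pair of classes by \eqref{szt}, and then argue that to accumulate $\Omega(n^{1+1/k})$ incidences one is forced into a block dense enough that the $C_{2k}$-free hypothesis must fail. A complementary approach is a polynomial-partitioning recursion: choose a partitioning polynomial of degree $D$, split $\RR^2$ into $O(D^2)$ cells with each line meeting $O(D)$ of them, observe that the incidence graph induced in each cell is again $C_{2k}$-free, set up a recursion of the shape
\[
I(n)\ \le\ \sum_{c}I(n_c)\ +\ (\text{cell-boundary incidences}),
\]
and attempt to solve it so as to beat the exponent $1+1/k$.

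The crux, and the reason the conjecture remains open, is that the $C_{2k}$-free condition is intrinsically \emph{non-local} for $k\geq 3$. A forbidden cycle is a long alternating zig-zag $p_0\in\ell_0\ni p_1\in\ell_1\cdots$ of points and lines that can wind through arbitrarily many cells, or across arbitrarily many dyadic degree classes; unlike $C_4$-freeness, which localizes perfectly because a $4$-cycle lives on two lines meeting in a single point, a $2k$-cycle survives restriction only in the useless form of $C_4$-freeness again. I therefore expect any partition-based argument to stall precisely here: breaking the plane destroys the very global cycle structure one wants to exploit, and reassembling a $C_{2k}$ from cell-local data requires ruling out that the incidences concentrate so as to close up a $2k$-cycle only at the global scale. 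Overcoming this seems to need genuinely new input --- either a geometric analogue of the theta-graph lemma that respects the projective incidence structure, or a way to leverage the algebraic rigidity that underlies the densest known extremal configurations.

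Finally, I would treat the $k=3$ case as the first realistic milestone, since there one only needs to shave the Szemer\'edi--Trotter bound by a sub-polynomial factor under the extra $C_6$-free assumption, and I would test the whole program first in the more rigid settings of pseudolines or algebraic curves of bounded degree, where the same obstruction appears in a cleaner form.
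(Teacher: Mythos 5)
You have not proved Conjecture~\ref{conj1}, and neither does the paper: the statement is an open conjecture, which the paper explicitly records as settled only for $k=3$ (by Solymosi) and open for all $k\geq 4$. So there is no ``paper proof'' to match your argument against; what the paper actually contributes is the \emph{opposite} direction, namely the constructions of Theorems~\ref{Lower Bound 2k} and~\ref{Lower Bound 10}, which give lower bounds $\Omega(n^{1+\frac{4}{k^2+6k-3}})$ (girth $\geq k+5$) and $\Omega(n^{1+1/15})$ ($C_{10}$-free), constraining how strong any eventual upper bound can be without deciding the conjecture. Your text is candid that it is a research plan rather than a proof --- you write that ``the conjecture remains open'' and that your methods ``stall'' --- so as a proof attempt it has an unfilled gap by its own admission. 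Note also that your proposed ``first realistic milestone,'' the case $k=3$, is already done; a serious attempt should start from Solymosi's argument rather than rediscover that target.

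To name the gap concretely: both of your proposed mechanisms fail at the same step, and no idea in the proposal closes it. In the dyadic-decomposition approach, the step ``one is forced into a block dense enough that the $C_{2k}$-free hypothesis must fail'' is circular: a bipartite block with $m$ vertices a side and $\gg m^{1+1/k}$ edges contradicts $C_{2k}$-freeness only via Bondy--Simonovits, which is exactly the bound $\ex(n,C_{2k})=O(n^{1+1/k})$ you are trying to improve, so density accounting with \eqref{szt} can never produce a contradiction below that threshold --- one would need a supersaturation or theta-graph lemma valid at Szemer\'edi--Trotter densities, which does not currently exist. In the polynomial-partitioning approach, the recursion as written is not even well-posed for this purpose: each line meets $O(D)$ of the $O(D^2)$ cells, so line multiplicities sum to $\Theta(nD)$ across cells and the subproblems are not balanced $n$-point/$n$-line instances; moreover, as you yourself observe, the only hypothesis inherited by each cell is $C_4$-freeness, and running the recursion on $C_4$-freeness alone merely reproduces the exponent $4/3$ of \eqref{szt}. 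Identifying this non-locality obstruction is a correct and useful diagnosis, but it is a statement of the difficulty, not a proof, and the proposal supplies no mechanism --- geometric theta-graph lemma, algebraic rigidity argument, or otherwise --- that overcomes it.
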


\noindent Solymosi proved Conjecture \ref{conj1} for $k=3,$ but it is still an open problem for $k \geq 4.$

In this paper, we apply a simple technique of realizing point-line incidence graphs arising from finite geometries as point-line incidence graphs in the Euclidean plane.  As expected, we obtain some loss on the number of edges.  Our first result is stated below, which is obtained by applying this technique to a well-known construction of Lazebnik and Ustimenko \cite{lazebnik1995explicit} of a large graph with large girth.

\begin{Th}\label{Lower Bound 2k}
Let $k\geq 3$ be an odd integer. For $n >1,$ there exists an arrangement of $n$ points ${P}$ and $n$ lines $\mathcal{L}$ in the plane such that their incidence graph $G=(\mathcal{P} \cup \mathcal{L}, I)$ has girth $g \geq k+5$ and determines at least $\Omega({n^{1+\frac{4}{k^2+6k-3}}})$ incidences.
\end{Th}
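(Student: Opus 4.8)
The plan is to realize a large subgraph of the Lazebnik--Ustimenko graph $D(k,q)$ as a genuine point--line arrangement in $\RR^2$, losing edges only through a controlled truncation of coordinates. Recall that $D(k,q)$ is the bipartite graph with parts $P,\mathcal{L}$ both equal to $\mathbb{F}_{q}^{k}$, in which a point $p=(p_1,\dots)$ and a line $\ell=[\ell_1,\dots]$ are adjacent exactly when a cascade of bilinear relations of the shape $(\text{new line coordinate})-(\text{point coordinate})=(\text{earlier coordinate})\cdot(\text{earlier coordinate})$ holds over $\mathbb{F}_{q}$; for odd $k$ this graph is $q$-regular with $q^{k}$ points and $q^{k}$ lines and has girth at least $k+5$. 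I take this girth bound as given.

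First I would flatten the cascade. By repeatedly substituting each relation into the next, every defining relation can be rewritten purely in terms of the slope variable $\ell_1$ and the coordinates of $p$: for each relevant index $\alpha$ one obtains an integer identity $\ell_\alpha = \ell_1\, f_\alpha(p) + g_\alpha(p)$, where $f_\alpha,g_\alpha$ are polynomials in the coordinates of $p$ whose degrees grow along the cascade (e.g.\ $\ell_{12}=p_{12}+\ell_{11}p_1$ and $\ell_{11}=p_{11}+\ell_1 p_1$ combine to $\ell_{12}=\ell_1 p_1^2+(p_{12}+p_{11}p_1)$). This is the crux of the idea: although $D(k,q)$ is defined by $k-1$ equations with several different multipliers, after flattening the only bilinear term is $\ell_1$ times an expression in $p$, so all the relations can be recorded in a single affine incidence $Y=\ell_1 X+c$, which is precisely the condition that the plane point $(X,Y)$ lie on the line of slope $\ell_1$ and intercept $c$.

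To make this faithful I would use a place-value (base-$M$) encoding. Assign to each point $p$ the plane point $\phi(p)=\big(X(p),Y(p)\big)$, where $X(p)$ and $Y(p)$ are the base-$M$ integers whose digit in place $M^\alpha$ is $f_\alpha(p)$ and $g_\alpha(p)$ respectively, and assign to the line $\ell$ the slope $\ell_1$ and the intercept whose digit in place $M^\alpha$ is $\ell_\alpha$; then the digit of $Y(p)-\ell_1 X(p)-c(\ell)$ in place $M^\alpha$ is $\pm(\ell_\alpha-\ell_1 f_\alpha(p)-g_\alpha(p))$. Choosing $M$ larger than the largest absolute value any such defect can attain, the elementary fact that $\sum_\alpha c_\alpha M^\alpha=0$ with $|c_\alpha|<M$ forces every $c_\alpha=0$ shows that $\phi(p)$ lies on the line of $\ell$ in $\RR^2$ if and only if all the flattened relations hold over $\mathbb{Z}$. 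To reconcile integer equality with the finite-field relations, I restrict to the induced subgraph $H$ on those points and lines all of whose coordinates, and all of whose intermediate products, lie below a threshold $s$, with $q$ a prime exceeding every quantity that appears; on this range the relations hold over $\mathbb{Z}$ exactly when they hold over $\mathbb{F}_{q}$, so $H$ is genuinely a subgraph of $D(k,q)$ and inherits girth at least $k+5$, while $\phi$ is injective and realizes $H$ as a faithful point--line arrangement with no spurious incidences.

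It remains to count, and this is where I expect the real work, and the exact exponent, to lie. The degree of a point in $H$ is the number of admissible slopes $\ell_1$, and the numbers of distinct points and lines are controlled by the allowed ranges of the individual coordinates. Because the flattened polynomials $f_\alpha,g_\alpha$ have increasing degrees, the requirement that no product exceed the threshold (so that no reduction modulo $q$ occurs and the realization stays faithful) forces the higher coordinates into successively smaller ranges. Assigning to each coordinate its own range, writing $|P|$, $|\mathcal{L}|$ and $|I|$ as monomials in these ranges, and optimizing subject to $|P|=|\mathcal{L}|=n$ is the main obstacle; since the relevant degrees sum to roughly $k^{2}/2$, the optimum produces a denominator quadratic in $k$ and should yield $|I|=\Omega\big(n^{1+4/(k^2+6k-3)}\big)$. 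The hard part is therefore the bookkeeping: tracking the degree, hence the admissible range, of every coordinate in the flattened system, checking that the two sides can be balanced without sacrificing incidence density, and verifying the no-carry condition that makes the Euclidean incidences coincide exactly with the adjacencies of $H$. Finally, to obtain a construction for every $n>1$ rather than only for the sizes produced by a prime $q$, I would take the largest such arrangement of size at most $n$ and pad with isolated points and lines, which affects neither the girth nor the order of magnitude of the incidence count.
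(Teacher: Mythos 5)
Your realization scheme is sound, and it is genuinely different from the paper's. The paper never flattens the cascade: it observes that for a fixed $v$ the $k-1$ defining equations are linear in the point coordinates, so each $v$ cuts out a line $\ell_v$ in $\RR^k$; it verifies distinctness of these lines and finishes with a projection of the whole arrangement from $\RR^k$ to the plane. You instead flatten to $\ell_\alpha=\ell_1 f_\alpha(p)+g_\alpha(p)$ --- which is correct: in every equation the right-hand side is a $u$-expression times a $v$-coordinate, and linearity in $v_1$ propagates down the cascade --- and then encode digit-by-digit in base $M$. Your no-carry lemma (signed digits of absolute value less than $M$ summing to zero must all vanish) is valid, the triangular structure of the system makes both encodings injective, and $M$ does not enter the counting, so this yields a faithful planar realization with no projection step. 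Up to this point the two approaches are interchangeable ways of reducing the theorem to the same combinatorial core.

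But that core --- the counting --- is precisely what you have not done, and it is where the exponent lives. You must exhibit explicit ranges for the coordinates so that (i) $|P|=|\mathcal{L}|=\Theta(n)$, and (ii) for every admissible point $u$ and every slope $v_1$ in an interval of length $\Omega\bigl(n^{4/(k^2+6k-3)}\bigr)$, all determined coordinates $v_\alpha=v_1f_\alpha(u)+g_\alpha(u)$ land back inside the prescribed ranges for $V'$ (closure under the cascade), so that each point really carries that many incidences. The paper does this concretely: the coordinate labelled $(i,j)$ gets range $\bigl[0,\,n^{4(i+j)/(k^2+6k-3)}\bigr]$ on the point side and the same range inflated by constant factors $2,3,4$ on the line side, and then $|P|=n$ because the weights sum to $1+2+2\cdot 3+\cdots+2\bigl(\tfrac{k+3}{2}-1\bigr)+\tfrac{k+3}{2}=\tfrac{k^2+6k-3}{4}$. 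Note that this sum is about $k^2/4$, not ``roughly $k^2/2$'' as your heuristic asserts; with your figure the optimization would output an exponent near $1+2/k^2$ rather than the claimed $1+4/(k^2+6k-3)$, which shows the bookkeeping genuinely determines the answer and cannot be waved through. (Also, what matters is not the degrees of the flattened polynomials $f_\alpha,g_\alpha$ per se, but how each coordinate's range must absorb the products of earlier ranges along the cascade.) Until the ranges are specified and the closure property (ii) is verified, the bound $|I|=\Omega\bigl(n^{1+4/(k^2+6k-3)}\bigr)$ is asserted, not proved; everything else in your write-up is a correct, and indeed somewhat more self-contained, frame around this missing center.
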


For $k=5,$ we obtain a slightly better bound by applying this technique to Wenger graphs.


\begin{Th}\label{Lower Bound 10}
There exists an arrangement of $n$ points $\mathcal{P}$ and $n$ lines $\mathcal{L}$ in the plane such that their incidence graph $G=(\mathcal{P} \cup \mathcal{L}, I)$ is $C_{10}$-free and determines at least $\Omega({n^{1+{1}/{15}}})$ incidences.
\end{Th}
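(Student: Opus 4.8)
The plan is to take the Wenger graph $W = W(q)$ that realizes the bound $\ex(n,C_{10}) = \Omega(n^{6/5})$ and to embed a carefully chosen sub-arrangement of it into the Euclidean plane. Recall that $W(q)$ is the bipartite graph with both parts equal to $\mathbb{F}_q^{5}$, in which a point $p = (p_1,\dots,p_5)$ is adjacent to a line $l = (l_1,\dots,l_5)$ if and only if $p_{i+1} = l_1 p_i - l_{i+1}$ for $i = 1,2,3,4$. This graph is $q$-regular, has $q^6$ edges, and by Wenger \cite{wenger1991extremal} it is $C_{10}$-free. Writing $t = l_1$, the points incident to a fixed line are exactly $\{(p_1,\, tp_1 - l_2,\, \dots) : p_1 \in \mathbb{F}_q\}$, i.e.\ a genuine affine line in $\mathbb{A}^5$ whose direction is the moment-curve vector $(1,t,t^2,t^3,t^4)$. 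The four defining relations are linear, and the idea is to \emph{pack} all four of them into a single linear equation in two real variables, so that each Wenger incidence becomes an incidence between a real point and a real line.

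More precisely, I would fix a large integer base $M$ and encode a point $p$ as the planar point $\big(p_1,\ \Phi(p)\big)$ with $\Phi(p) = \sum_{j=2}^{5} p_j M^{j-2}$, and encode a line $(t,l_2,l_3,l_4,l_5)$ as the real line $Y = S(t)X - C(t,l)$, where $S(t) = \sum_{j=2}^5 t^{j-1}M^{j-2}$ and $C(t,l)$ is the corresponding $M$-adic packing of the $l_i$. A short computation shows that the planar incidence $\Phi(p) = S(t)p_1 - C(t,l)$ is equivalent to $\sum_{j=2}^5 \delta_j M^{j-2} = 0$, where $\delta_j$ is the residual of the $j$-th accumulated Wenger relation. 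Choosing $M$ larger than any $|\delta_j|$ that can occur on our grid forces every $\delta_j = 0$ by uniqueness of the base-$M$ representation, hence the planar incidence holds if and only if all four Wenger relations hold. Since $\Phi$ and $(t,l)\mapsto(S(t),C(t,l))$ are injective for $M$ large, the resulting point-line incidence graph in the plane is isomorphic to a subgraph of $W(p)$ for a suitable large prime $p$, and is therefore $C_{10}$-free.

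It remains to choose the grid so as to maximize incidences subject to having equally many points and lines. I would take slopes $t \in \{1,\dots,T\}$ and points $p$ with $p_i \in \{1,\dots,T^{i}\}$. Each point then lies on exactly one selected line of each of the $T$ slopes, so the number of incidences is $T\cdot\prod_{i=1}^5 T^i = T^{16}$; the number of points is $\prod_{i=1}^{5} T^i = T^{15}$; and since a line of slope $t \le T$ meets the grid in $\min_i T^i/t^{i-1} = T$ points, the number of lines used is $T^{16}/T = T^{15}$ as well. Setting $n = T^{15}$ gives an arrangement of $\Theta(n)$ points and $\Theta(n)$ lines with $\Omega(T^{16}) = \Omega(n^{1+1/15})$ incidences, which is the claimed bound (after trivial padding to make the two sides exactly equal).

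The main obstacle is the interplay between the two requirements. On one hand, $M$ must be taken large enough that the $M$-adic packing never produces a spurious incidence, since a single extra edge could create a $C_{10}$ and destroy the reduction to $W(p)$; this forces the point and line coordinates to be polynomially large but is otherwise harmless. On the other hand, the exponent is extremely sensitive to the grid dimensions: the geometric scaling $N_i = T^i$ is what balances the per-slope line count $\prod_i N_i / \min_i(N_i t^{-(i-1)})$ against the point count $\prod_i N_i$ for \emph{every} slope simultaneously, and the value $1/15$ is exactly $1/\sum_{i=1}^5 i$. Verifying that this scaling is optimal within the scheme, and that the line-count and point-count constants can be matched, are the steps requiring the most care; the $C_{10}$-freeness, by contrast, is inherited for free from Wenger's theorem once the encoding is shown to be faithful.
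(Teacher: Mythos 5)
Your proposal is correct in substance and starts from the same place as the paper --- Wenger's $C_{10}$-free graph for $k=5$ restricted to a geometrically scaled integer box (slopes $t\in[1,T]$, coordinates $p_i\in[1,T^i]$, $n=T^{15}$; this is exactly the paper's choice of $U'$, with ranges $n^{2(k-i)/(k(k+1))}=T^{k-i}$, after reversing the coordinate order) --- but your realization step is genuinely different. The paper first realizes the restricted graph as a point-line incidence graph in $\mathbb{R}^5$, each line $\ell_v$ being the solution set of $k-1$ independent linear equations, and then projects \emph{generically} to the plane, so that incidences are preserved and no new ones are created. You instead exhibit an \emph{explicit} projection: your encoding $p\mapsto\bigl(p_1,\sum_{j\geq 2}p_jM^{j-2}\bigr)$ is itself a concrete linear map $\mathbb{R}^5\to\mathbb{R}^2$, and the base-$M$ digit-uniqueness argument (with $M$ exceeding twice the largest possible residual $|\delta_j|$; note the $\delta_j$ are bounded by a fixed power of $T$ independent of $M$, so there is no circularity) is precisely the verification that this particular projection is faithful on your finite configuration. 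This buys a fully explicit integer arrangement in the plane at the price of the digit bookkeeping; the paper's soft genericity argument is shorter and applies verbatim to the $D(q,k)$ construction of its other theorem. You should also make explicit the (standard) isomorphism between your form $p_{i+1}=l_1p_i-l_{i+1}$ of the adjacency and Wenger's $v_j=u_j+u_{j+1}v_{k-1}$ before invoking $C_{10}$-freeness, and, as you note, choose the prime $p$ larger than all coordinates so the integer relations embed the configuration into $H_5(p)$.

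One step of your counting is wrong as stated, though the conclusion survives. Your incidence count is robust (every grid point lies on exactly one line of each of the $T$ slopes, giving $T^{16}$ incidences), but it is \emph{not} true that every used line of slope $t$ meets the box in $\min_i T^i/t^{i-1}=T$ points: that quantity is only the maximum, and lines clipped by the boundary of the box can meet it in as few as one point, so you cannot obtain the number of lines by dividing $T^{16}$ by $T$. The fix is short: lines of slope $t$ meeting the box correspond to translation classes of box points under $p\mapsto p+(1,t,t^2,t^3,t^4)$, and a class representative (a point with no predecessor in the box) must satisfy $p_1=1$ or $p_i\leq t^{i-1}$ for some $i\geq 2$, giving at most $5T^{14}$ lines per slope and $O(T^{15})$ lines in total, which is all you need. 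Alternatively you can do what the paper does: restrict the intercept coordinates $v_i$ to dyadic upper sub-intervals, which guarantees that \emph{every} selected line carries at least $n^{1/15}$ box points and removes clipped lines from the outset.
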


We systemically omit floor and ceiling signs whenever they are not crucial for the sake
of clarity of our presentation.

\section{Proof of Theorem \ref{Lower Bound 2k}}
In \cite{lazebnik1995explicit}, Lazebnik and Ustimenko gave the following construction of a $q$-regular bipartite graph $D(q,k)=(U \cup V, E)$ on $2 q^{k}$ vertices with girth $g \geq k+5,$ where $q$ is a prime power and $k \geq 3$ is an odd integer. The vertex set of $D(q,k)$ is $U \cup V,$ where $U=V=\{0,1, \ldots, q-1\}^k.$ In order to define $E \subset U \times V,$ we will label the coordinates of $u \in U$ and $v \in V$ as follows.
$$~~~u=\left(u_{1}, u_{1,1}, u_{1,2}, u_{2,1}, u_{2,2}, u_{2,2}^{\prime}, u_{2,3}, u_{3,2}, u_{3,3}\ldots, u_{i, i}^{\prime}, u_{i, i+1}, u_{i+1, i}, u_{i+1, i+1}, \ldots\right).$$

\noindent and
$$v=\left(v_{1}, v_{1,1}, v_{1,2}, v_{2,1}, v_{2,2}, v_{2,2}^{\prime}, v_{2,3}, v_{3,2}, v_{3,3}, \ldots, v_{i, i}^{\prime}, v_{i, i+1}, v_{i+1,i}, v_{i+1, i+1}, \ldots\right),$$

\noindent Note that we only consider the first $k$ such coordinates. For example when $k=5,$ we have 
$u=\left(u_{1}, u_{1,1}, u_{1,2}, u_{2,1}, u_{2,2}\right), v=\left(v_{1}, v_{1,1}, v_{1,2}, v_{2,1}, v_{2,2}\right).$
Then $uv \in E $ if and only if the following $k-1$ equations are satisfied:

\[
                        \begin{array}{ll}
v_{1,1}-u_{1,1}=v_{1}u_1 \hspace{2.3cm} \bmod q   \\
v_{1,2}-u_{1,2}=v_{1,1}u_1  \hspace{2.05cm} \bmod q \\
v_{2,1}-u_{2,1}=v_{1}u_{1,1}  \hspace{2.05cm} \bmod q \\ 
\end{array}\\
\]
\[
~~~~~~~~~~~~~~~~\left.\begin{array}{c}
v_{i, i}-u_{i, i}=v_{1} u_{i-1, i}  \hspace{2cm} \bmod q\\ {v_{i, i}^{\prime}-u_{i, i}^{\prime}=u_{1} v_{i, i-1}} \hspace{1.99cm} \bmod q  \\ {v_{i, i+1}-u_{i, i+1}=u_{1} v_{i, i}} \hspace{1.6cm} \bmod q \\ {v_{i+1, i}-u_{i+1, i}=v_{1} u_{i, i}^{\prime}} \hspace{1.6cm} \bmod q\end{array}\right\}
 \hspace{0.8cm}  i \geq 2. 
\]
where  $u_{1,1}^{\prime}=u_{1,1}, v_{1,1}^{\prime}=v_{1,1} . $ 

For example, if $k=5$, $uv \in E$ if and only if the following $4$ equations are satisfied.

\begin{align*}
~~~~~~~~~~~~~~~~~~\begin{array}{l}
v_{1, 1}-u_{1, 1}=v_{1} u_{1}  \hspace{2.35cm} \bmod q \\ 
{v_{1, 2}-u_{1, 2}= v_{1, 1}u_{1}} \hspace{2.1cm} \bmod q \\
 {v_{2,1}-u_{2,1}=v_{1} u_{1, 1}} \hspace{2.1cm} \bmod q \\
 {v_{2, 2}-u_{2, 2}}=v_{1} u_{1,2} \hspace{2.1cm} \bmod q . \hspace{2mm} ~~~~~~~~~~~~~~ 
 \end{array}
\end{align*}

\noindent In \cite{lazebnik1995explicit}, Lazebnik and Ustimenko proved the following.

\begin{Th}[Theorem 3.3 in \cite{lazebnik1995explicit}]\label{girth theorem}
For an odd integer $k \geq 3,$ the bipartite graph $D(k,q)$ described above has girth $g \geq k+5.$
\end{Th}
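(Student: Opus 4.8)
The plan is to exploit the \emph{triangular} (Hessenberg-like) structure of the defining system. First I would record the mechanism behind $q$-regularity: reading the $k-1$ equations in order, each one expresses a single new coordinate of $v$ as the sum of the corresponding coordinate of $u$ and a product of one first-coordinate ($u_1$ or $v_1$) with a coordinate that has \emph{already} been solved for. Hence, given $u\in U$ together with a free choice of $v_1\in\{0,\dots,q-1\}$, every remaining coordinate of the neighbour $v$ is forced; symmetrically, $v$ together with a choice of $u_1$ determines $u$. Thus each vertex has exactly $q$ neighbours, indexed by the first coordinate of the neighbour, and a cycle amounts to a choice of such parameters that returns to its starting vertex. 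I would set up a putative cycle $u^{(0)},v^{(0)},u^{(1)},\dots,u^{(t-1)},v^{(t-1)}$ of length $2t$ and abbreviate the first coordinates by $a_i=u^{(i)}_1$ and $b_i=v^{(i)}_1$, all indices read modulo $t$. Because a genuine cycle cannot revisit a vertex and a vertex is determined by a neighbour plus its first coordinate, the two $U$-vertices around a common $V$-vertex are distinct, so $a_i\neq a_{i+1}$ for every $i$ (and likewise $b_i\neq b_{i-1}$).

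The engine of the proof is a telescoping argument. Applying a fixed defining equation to the two edges incident to $v^{(i)}$ and subtracting eliminates the coordinate of $v^{(i)}$ on the left and leaves a relation of the shape $(\text{coord of }u^{(i+1)})-(\text{coord of }u^{(i)})=(\text{already-solved coordinate})\cdot(a_i-a_{i+1})$; summing around the cycle telescopes the left-hand side to $0$ and yields a \emph{moment condition} on the sequences $(a_i),(b_i)$. The lowest equation gives the linear condition $\sum_i a_i(b_i-b_{i-1})=0$, and feeding the solved coordinates back in, the higher equations produce conditions of strictly increasing degree, essentially one new condition per coordinate level. The crucial point is that only the first few levels are activated by a short cycle, so a cycle of length $2t\le k+3$ must satisfy a full initial block of these moment conditions. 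I would then prove the \emph{rigidity} statement that this block forces the color sequence $(a_i)$ to be constant; but constancy contradicts $a_i\neq a_{i+1}$, which is the desired contradiction. This rules out all cycles of length at most $k+3$ and gives girth $g\ge k+5$.

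The main obstacle is the bookkeeping in the two technical steps: writing the telescoped moment condition at each level in closed form through the nested substitutions, and proving that the resulting block of polynomial conditions over $\mathbb{Z}/q\mathbb{Z}$ admits only the constant solution once $2t\le k+3$. A genuinely delicate feature is the parity: the extra ``$+1$'' in $k+5$ (versus the $k+4$ one expects for general $k$) should come from the top-level condition available to a would-be shortest cycle, which is usable only when $k$ is odd. An alternative route I would keep in reserve is induction on $k$ in steps of two, using the coordinate-projection $D(q,k)\to D(q,k-2)$, which is a graph homomorphism onto a graph of girth $\ge k+3$ by the inductive hypothesis; a short cycle upstairs projects to a short closed walk downstairs, and one argues that the two extra coordinates cannot be filled in consistently unless that walk was already degenerate. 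Either way, the heart of the matter is the same rigidity principle: a short closed walk cannot carry enough freedom in its first-coordinate sequence to reconcile the full initial block of triangular equations.
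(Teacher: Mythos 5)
First, note that the paper you are commenting on contains no proof of this statement at all: it is imported verbatim as Theorem 3.3 of Lazebnik and Ustimenko \cite{lazebnik1995explicit}, so the only meaningful comparison is with that paper's proof. Your set-up is sound as far as it goes: the triangular structure does give $q$-regularity, a neighbour is indeed determined by its first coordinate, hence $a_i\neq a_{i+1}$ and $b_i\neq b_{i-1}$ along any cycle, and the telescoped bilinear condition $\sum_i b_i(a_i-a_{i+1})=0$ is correct (after full unwinding the higher-level conditions also depend only on the colour sequences, since the base-point terms are killed by $\sum_i(a_i-a_{i+1})=0$). But there is a genuine gap: the entire content of the theorem is the ``rigidity'' lemma you explicitly defer, namely that for $2t\le k+3$ the block of telescoped conditions admits no admissible colour sequence. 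This is not bookkeeping; it is the theorem. Worse, the conclusion you propose to prove --- that $(a_i)$ must be \emph{constant} --- is stronger than what such arguments can or need to deliver; the standard way such proofs close is to extract a single degenerate adjacent pair $a_i=a_{i+1}$ (a backtrack), and one should be sceptical of any claim that generic moment conditions of this type force degeneracy, since the exactly analogous claim for Wenger-type systems fails for general $k$ (otherwise the Erd\H{o}s--Simonovits conjecture would be settled constructively). Lazebnik and Ustimenko do not argue by direct moment rigidity: their published proof runs through the route you relegate to ``reserve,'' an induction using the coordinate projections $D(k,q)\to D(k-2,q)$ together with invariants of the equation system, and the hard work lies in controlling what a short cycle can project to.

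Your parity diagnosis is also wrong. The extra ``$+1$'' for odd $k$ does not come from a top-level condition usable only for odd $k$: one proves girth at least $k+4$ for all $k$, and since $D(k,q)$ is bipartite its girth is even, so when $k$ is odd the bound $k+4$ (an odd number) upgrades to $k+5$ for free. Relatedly, your remark that ``only the first few levels are activated by a short cycle'' has the logic inverted: every edge satisfies all $k-1$ equations, so a short cycle is constrained by \emph{all} levels, and the mechanism is many conditions against few colour variables, which is exactly why the rigidity step cannot be waved through. As it stands, your proposal is a plausible skeleton whose load-bearing lemma is missing; to complete it honestly you should carry out the projection induction rather than the direct rigidity claim.
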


We will use this construction to prove Theorem \ref{Lower Bound 2k}.
\begin{proof}[Proof of Theorem \ref{Lower Bound 2k}]
Let $k \geq 3,$ and $n > 1$ be sufficiently large such that $\lfloor n^{\frac{4}{k^2+6k-3}} \rfloor \geq 1.$ By Bertrand's postulate theorem, there is a prime number $q$ such that $4{n}^{\frac{8}{k}}< q < 8{n}^{\frac{8}{k}}.$ Then let $D(q,k)=(U\cup V, E )$ be defined as above. Let $U' \subset U$ and $V' \subset V$ such that $u \in U'$ if

 \begin{align*}
~~~~~~~~~~~~~~~~\begin{array}{l}
0 \leq u_1 \leq n^{\frac{4}{k^2+6k-3}},\\
0 \leq u_{i,j} \leq n^{\frac{4(i+j)}{k^2+6k-3}} ~~~~~~    i,j \geq 1,\\
0 \leq u'_{i,j} \leq n^{\frac{4(i+j)}{k^2+6k-3}} ~~~~~~   i,j \geq 1.
 \end{array}
 \end{align*}
and $v \in V'$ if

\begin{align*}
~~~~~~~~~~~~~~\begin{array}{l}
0 \leq v_1 \leq 2n^{\frac{4}{k^2+6k-3}},\\
0 \leq v_{i,i+1} \leq 4n^{{\frac{4(2i+1)}{k^2+6k-3}}},\\
0 \leq v_{i+1,i} \leq 3n^{{\frac{4(2i+1)}{k^2+6k-3}}}, ~~~~~~~   i \geq 1,\\
0 \leq v'_{i,i} \leq 4n^{{\frac{4(2i)}{k^2+6k-3}}},\\
0 \leq v_{i,i} \leq 3n^{{\frac{4(2i)}{k^2+6k-3}}}.\\
                        \end{array}
 \end{align*}
 
 Then let $E' \subset U' \times V'$ such that $uv \in E'$ if and only if the following $k-1$ equations are satisfied

\[
                        \begin{array}{ll}
v_{1,1}-u_{1,1}=v_{1}u_1  \\
v_{1,2}-u_{1,2}=v_{1,1}u_1   \\
v_{2,1}-u_{2,1}=v_{1}u_{1,1}
\end{array}
\]
\[
~~~~~~~~~~~~~~~~~\left.\begin{array}{c}
v_{i, i}-u_{i, i}=v_{1} u_{i-1, i}  \label{eq:einstein} \tag{2.1}\\ {v_{i, i}^{\prime}-u_{i, i}^{\prime}=u_{1} v_{i, i-1}}  \\ 
~~~{v_{i, i+1}-u_{i, i+1}=u_{1} v_{i, i}} \\ 
~~~{v_{i+1, i}-u_{i+1, i}=v_{1} u_{i, i}^{\prime}}
  \end{array}\right\} \hspace{0.8cm}  i \geq 2. 
\]

The bipartite graph $G=(U' \cup V', E')$ is a subgraph of $D(q,k),$ and therefore, it has girth $g \geq k+5.$ It suffices to show that $|E'|= \Omega(n^{1+\frac{4}{k^2+6k-3}})$ and to realize $G$ as an incidence graph of points and lines in the plane.

Set $P=U' \subset \mathbb{Z}^k$. Then we have 
\begin{align*}
|P|&=n^{\frac{4}{k^2+6k-3}}\cdot n^{\frac{4(2)}{k^2+6k-3}} \cdot \left( \prod_{i=3}^{\frac{k-3}{2}-1} n^{\frac{4i}{k^2+6k-3}} \right)\cdot n^{\frac{4(\frac{k-3}{2})}{k^2+6k-3}}\\ 
&=n^{\frac{4}{k^2+6k-3}{\left(1+2+2(3)+ \ldots+2\left(\frac{k+3}{2}-1\right)+\frac{k+3}{2}\right)}}\\
& =n.
\end{align*}
We define a set of $|V'|$ lines $\mathcal{L}$ in $\mathbb{R}^k$ as follows. For each $v \in V',$ let $\ell_v$ be the solution space to the following system of $k-1$ equations over $k$ variables.
\newpage
\[
                        \begin{array}{ll}
v_{1,1}-x_{1,1}=v_{1}x_1   \\
v_{1,2}-x_{1,2}=v_{1,1}x_1 \\
\vspace{3mm}
v_{2,1}-x_{2,1}=v_{1}x_{1,1} \\
\end{array}
\]
\[ 
~~~~~~~~~~~~~~~~\left.\begin{array}{c}
v_{i,i}-x_{i,i}=v_{1}x_{i-1,i} \\
v'_{i,i}-x'_{i,i}=x_{1}v_{i,i-1} \label{eq:einstein2} \tag{2.2} \\
~~~v_{i,i+1}-x_{i,i+1}=x_{1}v_{i,i} \\
~~~v_{i+1,i}-x_{i+1,i}=v_{1}x'_{i,i} \\
  \end{array}\right\} \hspace{0.8cm}  i \geq 2. 
\]
Set $\mathcal{L}=\{\ell_v: v \in V'\}.$ It is easy to see that the $k-1$ equations above are independent and therefore, $\ell_v$ is a line in $\mathbb{R}^k.$ Moreover, each line is unique by the following claim.
\begin{Claim}
If $v,w \in V'$ are distinct, then $\ell_v \neq \ell_w.$
\end{Claim}
\begin{proof}
Let $v$ and $w$ be two distinct members of $V',$ where
\begin{align*}
~~~~~v&=(v_1,v_{1,1},v_{1,2},v_{2,1},v_{2,2},v'_{2,2},v_{2,3} ,v_{3,2},v_{3,3},\ldots, v'_{i,i}, v_{i,i+1},v_{i+1,i},v_{i+1,i+1}, \ldots),\\
 ~~~~~w&=(w_1,w_{1,1},w_{1,2},w_{2,1},w_{2,2},w'_{2,2},w_{2,3} ,w_{3,2},w_{3,3}, \ldots, w'_{i,i}, w_{i,i+1},w_{i+1,i},w_{i+1,i+1}, \ldots).
\end{align*}
Without loss of generality, we can assume $u_{1} \neq w_{1},$ as otherwise, we can show inductively that both vectors $u$ and $v$ have the same coordinates. For any point $u \in P,$
$$u=(u_1,u_{1,1},u_{1,2},u_{2,1},u_{2,2},u'_{2,2},u_{2,3},u_{3,2},u_{3,3} ,\ldots, u'_{i,i}, u_{i,i+1},u_{i+1,i},u_{i+1,i+1}, \ldots),$$
there is a unique solution to the following system of $k$ equations.

\[
 \begin{array}{ll}
w_{1,1}-u_{1,1}=w_{1}u_1\\                       
v_{1,1}-u_{1,1}=v_{1}u_1\\
v_{1,2}-u_{1,2}=v_{1,1}u_1
\end{array}
\]   
\[
~~~~~~~~~~~~~~~~~~\left. \begin{array}{c}
v_{i,i}-u_{i,i}=v_{1}u_{i-1,i}  \\
v'_{i,i}-u'_{i,i}=u_{1}v_{i,i-1}\\
~~~v_{i,i+1}-u_{i,i+1}=u_{1}v_{i,i}\\
~~~v_{i+1,i}-u_{i+1,i}=v_{1}u'_{i,i} 
\end{array}\right\} \hspace{1cm}  i \geq 2 .
\]                                      
 Hence, both $v$ and $w$ correspond to distinct lines.   \end{proof}

Therefore, we have
\begin{align*}
|\mathcal{L}|&=\Theta\left(n^{\frac{4}{k^2+6k-3}}\cdot n^{\frac{4(2)}{k^2+6k-3}} \cdot \left( \prod_{i=3}^{\frac{k-3}{2}-1} n^{\frac{4i}{k^2+6k-3}} \right)\cdot n^{\frac{4(\frac{k-3}{2})}{k^2+6k-3}}\right)\\ 
&=\Theta \left(n^{{\frac{4}{k^2+6k-3}{\left(1+2+2(3)+ \ldots+2\left(\frac{k+3}{2}-1\right)+\frac{k+3}{2}\right)}}}\right)\\
& =\Theta(n).
\end{align*}

Notice that every point $u \in P$ is incident to at least $2n^{\frac{4}{k^2+6k-3}}$ lines in $\mathcal{L}.$ Indeed, consider the $k-1$ equations (2.2).  There are $2n^{\frac{4}{k^2+6k-3}}$ choices for $v_1$. For fixed $u \in U,$ by fixing $v_1$ and by equation (2.1), we obtain $v_{1,1}$ sch that $0 \leq v_{1,1}=v_1u_1+u_{1,1} \leq 3 n^{\frac{8}{k^2+6k-3}}.$ By repeating the same argument the rest of the coordinates of $v$ will be determined uniquely.

Hence, we have a set of $\Theta(n)$ points $P$ and $\Theta(n)$ lines in $\mathbb{R}^k$ such that $|I(P,k)|=\Omega(n^{1+\frac{4}{k^2+6k-3}}).$  Projecting points $P$ and lines $\mathcal{L}$ into the plane completes the proof of Theorem \ref{Lower Bound 2k}.\end{proof}


\section{Proof of Theorem \ref{Lower Bound 10}}
Let $k \in \{2,3,5\}.$ In \cite{wenger1991extremal}, Wenger gave the following construction of a $C_{2k}$-free and $p$-regular bipartite graph $H_{k}(p)=(U \cup V, E)$ on $2 p^{k}$ vertices, where $p$ is a prime power. First we briefly discuss the Wenger's construction. The vertex set of $H_{k}(p)$
is  $U \cup V,$ where $U=V=\{0, \ldots, p-1\}^k$.
In order to define $E \subset U \times V,$ we will label the coordinates of $u \in U$ and $v \in V$ as follows.

$$~~~~~~u=\left(u_{0}, u_{1}, \ldots, u_{k-1}\right) ~~~~~ \text {and } ~~~~~v=\left(v_{0}, v_{1}, \ldots, v_{k-1}\right).$$
Then $uv \in E$ if and only if the following $k-1$ equations are satisfied:

$$v_j=u_j+u_{j+1} v_{k-1} ~~~\bmod p \hspace{0.90cm}  j=0, \ldots, k-2.$$

In \cite{wenger1991extremal}, Wenger proved the following.
\begin{Th}\label{Cycle free Wenger}
For $k\in \{2,3,5\},$ the bipartite graph $H_{k}(p)$ described above is $C_{2k}$-free.
\end{Th}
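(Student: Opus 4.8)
The plan is to assume for contradiction that $H_k(p)$ contains a cycle of length exactly $2k$. Since the graph is bipartite, I would write such a cycle as $u^{(1)},v^{(1)},u^{(2)},v^{(2)},\ldots,u^{(k)},v^{(k)},u^{(1)}$, with all $u^{(i)}\in U$ and all $v^{(i)}\in V$ distinct and indices read cyclically (so $u^{(k+1)}=u^{(1)}$). The goal is to distill from the $k-1$ defining equations a single Vandermonde-type linear system governing the differences of consecutive $U$-vertices, and then to show that for $k\in\{2,3,5\}$ the combinatorics of that system cannot be realised.

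First I would use that each $v^{(i)}$ is adjacent to both $u^{(i)}$ and $u^{(i+1)}$. Writing the defining equations $v_j=u_j+u_{j+1}v_{k-1}$ for these two incidences and subtracting them kills the shared coordinates of $v^{(i)}$ and leaves, with $d^{(i)}_j:=u^{(i)}_j-u^{(i+1)}_j$ and $s_i:=-v^{(i)}_{k-1}$, the recursion $d^{(i)}_j=s_i\,d^{(i)}_{j+1}$ for $j=0,\ldots,k-2$. Iterating gives $d^{(i)}_j=s_i^{\,k-1-j}c_i$ where $c_i:=d^{(i)}_{k-1}$. Summing each fixed coordinate around the closed cycle telescopes to zero, i.e. $\sum_{i=1}^k d^{(i)}_j=0$, which upon substituting $m=k-1-j$ yields the clean system $\sum_{i=1}^k c_i\,s_i^{\,m}=0$ for $m=0,1,\ldots,k-1$.

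Next I would record two non-degeneracy facts and then apply Vandermonde. Because consecutive $U$-vertices on the cycle are distinct, $d^{(i)}\neq 0$, and since $d^{(i)}=s_i^{\,k-1-j}c_i$ this forces $c_i\neq 0$ for every $i$. Because a $V$-vertex is determined by its $U$-neighbour together with its last coordinate, the two distinct cycle-neighbours $v^{(i)},v^{(i+1)}$ of $u^{(i+1)}$ must have distinct last coordinates, giving $s_i\neq s_{i+1}$ cyclically. Now I would group the indices by the value of $s_i$: if there are $r$ distinct values $t_1,\ldots,t_r$, then restricting the system to $m=0,\ldots,r-1$ is a nonsingular $r\times r$ Vandermonde system in the group-sums $C_\ell=\sum_{i:\,s_i=t_\ell}c_i$, forcing every $C_\ell=0$. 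Since each $c_i\neq 0$, no group can be a singleton, so each group has size at least two and hence $r\le\lfloor k/2\rfloor$.

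Finally I would interpret the condition $s_i\neq s_{i+1}$ as a proper colouring of the cycle $C_k$ whose colours are the $r\le\lfloor k/2\rfloor$ distinct values of $s$; such a colouring requires $\lfloor k/2\rfloor\ge\chi(C_k)$. For $k\in\{2,3,5\}$ this is impossible, since $\chi(C_k)=3$ for $k\in\{3,5\}$ and $\chi(C_2)=2$, whereas $\lfloor k/2\rfloor$ equals $1,1,2$ respectively; thus no admissible colouring exists and the assumed $2k$-cycle cannot occur. I expect the main obstacle to be establishing the two non-degeneracy conditions cleanly — in particular arguing that distinctness of the $v^{(i)}$ forces distinct consecutive last coordinates — since the Vandermonde step itself is routine. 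It is exactly the numerical clash $\lfloor k/2\rfloor<\chi(C_k)$ that singles out $k\in\{2,3,5\}$ and that breaks down at $k=4$, where $\lfloor 4/2\rfloor=2=\chi(C_4)$, matching the fact that the statement is false for $k=4$.
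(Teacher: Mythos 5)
Your proposal is correct and follows essentially the same route as the source proof: the paper states this theorem with a citation to Wenger \cite{wenger1991extremal} rather than proving it, and Wenger's argument is precisely your chain of difference recursions $d^{(i)}_j=s_i\,d^{(i)}_{j+1}$, the telescoped power sums $\sum_i c_i s_i^{\,m}=0$ for $m=0,\dots,k-1$, a Vandermonde step forcing each value class of the $s_i$ to have size at least two (hence $r\le\lfloor k/2\rfloor$), played against the properness condition $s_i\ne s_{i+1}$ around the index cycle, which needs $r\ge\chi(C_k)$ colours and fails exactly for $k\in\{2,3,5\}$. Your two non-degeneracy facts are also established correctly ($c_i\ne 0$ since consecutive $U$-vertices of the cycle are distinct, and $s_i\ne s_{i+1}$ because a vertex of $V$ is determined by a $U$-neighbour together with its last coordinate), so nothing needs to be fixed.
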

We will use this construction to prove Theorem \ref{Lower Bound 10}.
\begin{proof}[Proof of Theorem \ref{Lower Bound 10}]

Let $k \geq \{2,3,5\}$ and $n > 1$ be sufficiently large such that $\lfloor n^{\frac{2}{k^2+2k}} \rfloor \geq 1.$ By Bertrand's postulate theorem, there is a prime number $p$ such that $2^{2k}{n}^{\frac{2}{k}}< p < 2^{2k+1}n^{\frac{2}{k}}.$ Then let $H_{k}(p)=(U \cup V, E)$ be defined as above. Let $U' \subset U$ and $V ' \subset V$ such that $u \in U'$ if 

\begin{align*}
0 &\leq u_i  \leq 2^{2(k-i-1)}n^{\frac{2(k-i)}{k(k+1)}} \hspace{1.5cm} 0 \leq i \leq k-2, \\
0 &\leq u_{i} \leq n^{\frac{2}{k(k+1)}} \hspace{3cm} i=k-1,\\
\end{align*}

\noindent and $v \in V'$ if 

\begin{align*}
 2^{2(k-i-1)-1}n^{\frac{2(k-i)}{k(k+1)}}  &\leq v_i  \leq 2^{2(k-i-1)}n^{\frac{2(k-i)}{k(k+1)}} \hspace{1cm} 0 \leq i \leq k-2,\\
                         n^{\frac{2}{k(k+1)}} &\leq v_{i} \leq 2n^{\frac{2}{k(k+1)}} \hspace{2.6cm} i=k-1.
\end{align*}

Let $E' \subset U' \times V'$ such that $uv \in E'$ if and only if the following $k-1$ equations are satisfied:
\begin{align}\label{nomod}
~~~~~v_j=u_j+u_{j+1} v_{k-1}  \hspace{2.6cm}  j=0, \ldots, k-2.
\end{align}

The bipartite graph $G=(U' \cup V', E')$ is a subgraph of $H_k(p),$ and therefore, it is $C_{2k}$-free. It suffices to show that $|E'|\geq \Omega(n^{1+\frac{2}{k^2+k}})$ and to realize $G$ as the incidence graph of points and lines in the plane.

Let $P=U' \subset \mathbb{Z}^k.$ Then we have 

\begin{align*}
|P|&=\prod_{i=0}^{k-1} 2^{2(k-i-1)}n^{\frac{2(k-i)}{k(k+1)}}=2^{k(k-1)} n=\Theta_k(n).
\end{align*}

We define a set of $|V'|$ lines $\mathcal{L}$ in $\mathbb{R}^k$ as follows. For each $v \in V',$ let $\ell_v$ be the solution space to the following system of $k-1$ equations over $k$ variables.

\begin{align}\label{relations2}
x_i+v_{k-1}x_{i+1}-v_i=0  \hspace{2.2cm}  j=0, \ldots, k-2.
\end{align}
Set $\mathcal{L}=\{\ell_v: v \in V'\}.$ It is easy to see that the $k-1$ equations above are independent and therefore, $\ell_v$ is a line in $\mathbb{R}^k.$ Moreover, we have the following claim.
\begin{Claim}
If $v,w \in V'$ are distinct, then $\ell_v \neq \ell_w.$
\end{Claim}
\begin{proof}
Let $v$ and $w$ be two distinct members of $V',$ where
\begin{align*}
 v=(v_1, \ldots,v_{k-1} )~~~~ \text {and } ~~~~w=(w_1, \ldots,w_{k-1} ).
\end{align*}
Without loss of generality, we can assume $u_{1} \neq w_{1},$ as otherwise, we can show inductively that both vectors $u$ and $v$ have the same coordinates. For any point $u=(u_1,u_2, \ldots, u_{k-1}) \in P,$
there is a unique solution to the following system of $k$ equations.
\begin{align*}
                        \begin{array}{ll}
 x_1+w_{k-1}x_{2}-w_1=0\\                       
 x_i+v_{k-1}x_{i+1}-v_1=0 \hspace{3cm} 0 \leq i \leq k-2.\\
                        \end{array}
\end{align*}
Hence, both $v$ and $w$ correspond to distinct lines.   \end{proof}

Therefore, we have

\begin{align*}
|\mathcal{L}|&=\frac{1}{{2^{k-1}}}{\prod_{i=0}^{k-1} 2^{2(k-i-1)}n^{\frac{2(k-i)}{k(k+1)}}}=2^{k^2-1}n=\Theta_k(n).
\end{align*}

Notice that every line $\ell_v \in \mathcal{L}$ is incident to at least $n^{\frac{2}{k^2+k}}$ points in $P.$ Indeed, consider $k-1$ equations \ref{relations2}. There are $n^{\frac{2}{k^2+k}}$ choices for $u_{k-1}$. For fixed $ v \in V',$ by fixing $u_{k-1}$ and by equation \ref{nomod}, we obtain $u_{k-2}=v_{k-2}-u_{k-1}v_{k-1},$ such that

 $$0 \leq u_{k-2}  \leq 2^2n^{\frac{4}{k(k+1)}},$$

\noindent since

$$ 2 n^{\frac{4}{k(k+1)}} \leq v_{k-2} \leq 2^2 n^{\frac{4}{k(k+1)}} ~~~~\text{ and }~~~ -2n^{\frac{4}{k(k+1)}}\leq -u_{k-1}v_{k-1} \leq 0.$$

\noindent  By repeating the same argument the other coordinates of $u$ will be determined uniquely.

Thus we have a set of $\Theta(n)$ points $P$ and $\Theta(n)$ lines in $\mathbb{R}^k$ such that $|I(P,k)|=\Omega(n^{1+\frac{2}{k^2+k}}).$  Projecting points $P$ and lines $\mathcal{L}$ into the plane completes the proof of Theorem.\end{proof}


\section{ Concluding Remarks}
As a corollary of Theorem \ref{Lower Bound 2k} for $k=3,$ we get that there exists an arrangement of $n$ points and $n$ lines in the plane with no $C_6$ in the incidence graph while determining $\Omega(n^{1+\frac{1}{6}})$ incidences. It is worth mentioning that one can follow the construction of Theorem 1.5 in \cite{mirzaei2018grids} to get a construction of an arrangement of $n$ points and $n$ lines in the plane where their incidence graph is $C_6$-free while it determines $\Omega(n^{1+\frac{1}{7}})$ incidences.

\section{References}
\bibliographystyle{acm}
\bibliography{bibfile}

\end{document}